\documentclass[11pt]{article}
\usepackage{amsmath,amsthm,amssymb}
\usepackage[dvips]{graphicx}

\setlength{\topmargin}{-1cm}
\setlength{\headsep}{1cm}
\setlength{\textwidth}{15cm}
\setlength{\textheight}{23cm}
\setlength{\oddsidemargin}{1cm}
\setlength{\evensidemargin}{0cm}

\pagestyle{myheadings}\markright{An upper bound for the logarithmic capacity of two intervals}

\renewcommand{\P}{\mathbb P}

\newcommand{\dd}{{\operatorname{d}}}
\newcommand{\sn}{{\operatorname{sn}}}
\newcommand{\cn}{{\operatorname{cn}}}
\newcommand{\dn}{{\operatorname{dn}}}
\newcommand{\zn}{{\operatorname{zn}}}
\newcommand{\CAP}{{\operatorname{cap}}}
\newcommand{\C}{C_{\alpha,\beta}}
\newcommand{\laK}{\lambda{K}}
\newcommand{\KK}{\tfrac{K}{2}}


\begin{document}


\title{An Upper Bound for the Logarithmic Capacity of Two Intervals\footnote{published in: Complex Variables and Elliptic Equations {\bf 53} (2008), 65--75.}}
\author{Klaus Schiefermayr\footnote{University of Applied Sciences Upper Austria, School of Engineering and Environmental Sciences, Stelzhamerstrasse\,23, 4600 Wels, Austria, \textsc{klaus.schiefermayr@fh-wels.at}}}
\date{}
\maketitle

\theoremstyle{plain}
\newtheorem{theorem}{Theorem}
\newtheorem{lemma}{Lemma}
\theoremstyle{definition}
\newtheorem*{remark}{Remark}

\begin{abstract}
The logarithmic capacity (also called Chebyshev constant or transfinite diameter) of two real intervals $[-1,\alpha]\cup[\beta,1]$ has been given explicitly with the help of Jacobi's elliptic and theta functions already by Achieser in 1930. By proving several inequalities for these elliptic and theta functions, an upper bound for the logarithmic capacity in terms of elementary functions of $\alpha$ and $\beta$ is derived.
\end{abstract}

\noindent\emph{Mathematics Subject Classification (2000):} 33E05, 31A15

\noindent\emph{Keywords:} Chebyshev constant, Logarithmic capacity,  Jacobi's elliptic functions, Jacobi's theta functions, Transfinite diameter, Two intervals

\section{Introduction}


Let $C$ be a compact set in the complex plane and let $\P_n$ be the set of all polynomials of degree less or equal $n$. Then $L_n(C)$, defined by
\begin{equation}\label{Ln}
L_n(C):=\inf_{p_{n-1}\in\P_{n-1}}\sup_{z\in{C}}|z^n-p_{n-1}(z)|,
\end{equation}
is usually called the \emph{minimum deviation} of degree $n$ on $C$. The \emph{logarithmic capacity} (or \emph{Chebyshev constant}) of $C$ is then defined by the limit
\begin{equation}
\CAP(C):=\lim_{n\to\infty}\root{n}\of{L_n(C)}.
\end{equation}
The logarithmic capacity of $C$ can be given in a completely different way by
\begin{equation}
\CAP(C)=\lim_{n\to\infty}\sup_{z_i,z_j\in{C}}
\Bigl(\prod_{1\leq{i}<j\leq{n}}|z_i-z_j|\Bigr)^{\frac{2}{n(n-1)}}
\end{equation}
and, in this connection, is also called the \emph{transfinite diameter}. For further reading and weighted analogues concerning this constant, see \cite{MhaskarSaff}.


In this paper, we consider the case of two intervals, i.e.,
\begin{equation} \label{C}
C=\C:=[-1,\alpha]\cup[\beta,1], \quad -1<\alpha<\beta<1.
\end{equation}
The logarithmic capacity of two intervals $\C$ can only be given with the help of Jacobi's elliptic and theta functions (see Theorem\,\ref{Theorem_Achieser}). The purpose of this paper is to give an upper bound for $\CAP(\C)$ in terms of elementary functions of $\alpha$ and $\beta$.


Let $K\equiv{K}(k)$ and $E\equiv{E}(k)$ be the complete elliptic integral of the first and second kind, respectively, where $0<k<1$ is known as the modulus of $K$ and $E$. Let $k':=\sqrt{1-k^2}$, $K'\equiv{K}'(k):=K(k')$, and let $\sn(u)\equiv\sn(u,k)$, $\cn(u)\equiv\cn(u,k)$, $\dn(u)\equiv\dn(u,k)$ be Jacobi's elliptic functions with respect to the modulus $k$. Further, let $\Theta(u)\equiv\Theta(u,k)$, $H(u)\equiv H(u,k)$, $H_1(u)\equiv{H}_1(u,k)$ and $\Theta_1(u)\equiv\Theta_1(u,k)$ be the four theta functions of Jacobi (old notation).


Further, we need Jacobi's zeta function $\zn(u)\equiv\zn(u,k)$ given by
\begin{equation}
\zn(u)=\int_{0}^{u}\dn^2(v)\,\dd{v}-\frac{u\,E}{K}=\frac{\Theta'(u)}{\Theta(u)}.
\end{equation}


For the definitions and many important properties of these functions see, e.g., \cite{ByrdFriedman}, \cite{Lawden}, \cite[chapter\,X]{MagnusOberhettingerSoni} or \cite{WhittakerWatson}.


A formula for the logarithmic capacity of two intervals in terms of Jacobi's elliptic and theta functions has been derived by N.I.\,Achieser\,\cite{Achieser-1930}. For the background of \cite{Achieser-1930} and some related results, see \cite{Zolotarev1}, \cite{Zolotarev2}, \cite{Achieser-1929}, \cite{Achieser-1932}, \cite{Peh1995}, and \cite{PehSch}.


\begin{theorem}[Achieser\,\cite{Achieser-1930}]\label{Theorem_Achieser}
Let $\C:=[-1,\alpha]\cup[\beta,1]$, $-1<\alpha<\beta<1$, let the modulus $k$, $0<k<1$, be given by
\begin{equation} \label{k}
k^2:=\frac{2(\beta-\alpha)}{(1-\alpha)(1+\beta)}
\end{equation}
and let $0<\lambda<1$ be such that
\begin{equation} \label{sn}
\sn^2(\laK)=\frac{1-\alpha}{2}.
\end{equation}
Then
\begin{equation} \label{cap1}
\CAP(\C)=\frac{\Theta^4(0)}{2\,\dn^2(\laK)\Theta^4(\laK)}=\frac{2{k'}^2K^2}{\pi^2\dn^2(\laK)\Theta^4(\laK)}.
\end{equation}
\end{theorem}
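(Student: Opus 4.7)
The plan is to compute $\CAP(\C)$ by constructing the Green's function $g(z,\infty)$ of $\widehat{\mathbb{C}}\setminus\C$ with pole at infinity and then reading off Robin's constant from its expansion $g(z,\infty)=\log|z|-\log\CAP(\C)+o(1)$ as $z\to\infty$. Since $\C$ is a union of two real intervals, $\widehat{\mathbb{C}}\setminus\C$ is doubly connected, and its Schottky double along $\C$ is a compact Riemann surface of genus one, i.e.\ a complex torus. The natural uniformization of such a torus is by Jacobi elliptic functions, which is exactly the reason $k$, $K$, $\lambda$, $\sn$, $\dn$ and the theta functions appear in the final formula.

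The first step is to set up the uniformization. I would introduce a conformal map $u\mapsto z(u)$ from a fundamental rectangle in the $u$-plane onto the upper half $z$-plane slit along $\C$, arranged so that the four endpoints $-1,\alpha,\beta,1$ of $\C$ correspond to prescribed boundary points of the rectangle. Matching the cross-ratios of the four branch points with those of the corresponding rectangle corners under a standard elliptic substitution shows that the modulus must be precisely the $k$ defined in \eqref{k}; condition \eqref{sn} then identifies $\lambda\in(0,1)$ as the (scaled) location on the real $u$-axis of the boundary point corresponding to $z=\alpha$. A routine analysis locates the $u$-preimage $u_\infty$ of $z=\infty$ as an interior point of the rectangle whose coordinates are built from $\lambda K$ and $K'$.

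The second step is to express $g(z,\infty)$ in the $u$-coordinate. It must be harmonic on the uniformizing rectangle, vanish on its boundary, have a logarithmic singularity at $u_\infty$, and be invariant under the reflections realizing the Schottky double. These requirements determine it uniquely, and the theta-function calculus provides an explicit expression as the $\log$-modulus of a ratio of shifted $\Theta$-functions, plus a linear-in-$u$ correction that enforces the correct double-period behaviour. Expanding $z(u)$ near $u_\infty$ as $z(u)\sim c_{-1}/(u-u_\infty)$ and matching the singular part of $g$ with $\log|z(u)|$ then expresses $\log\CAP(\C)$ as a combination of $\log|c_{-1}|$ and theta-function values at $u_\infty$ and at $0$. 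Repeatedly applying standard identities --- the translations of $\Theta$ by $K$ and $iK'$, the quotient representations of $\sn$ and $\dn$ in terms of $\Theta,H,H_1,\Theta_1$, and the usual Jacobi product relations --- collapses the expression to the first form in \eqref{cap1}. The second equality in \eqref{cap1} is then immediate from Jacobi's classical identity $\Theta^{2}(0)=2k'K/\pi$.

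The main obstacle I expect is not any single deep step but the bookkeeping: correctly identifying the uniformizing rectangle and its boundary correspondence, verifying that the $\lambda$ defined by \eqref{sn} really coincides with the parameter governing the location of $u_\infty$, and then chasing the correct theta-function identities to bring the final expression into the compact form stated. Each individual identity is routine given the tables in, e.g., \cite{ByrdFriedman} or \cite{WhittakerWatson}, but a consistent choice of sign and normalization conventions for $\sn$, $\Theta$, $H$, etc.\ is essential, and it is here that most of the actual labour of the proof sits.
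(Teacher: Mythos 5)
First, a point of reference: the paper does \emph{not} prove Theorem~\ref{Theorem_Achieser}. It is quoted from Achieser's 1930 paper \cite{Achieser-1930} and used as a black box throughout, so there is no in-paper argument to compare yours against. Judged on its own terms, your strategy is the standard (and essentially Achieser's) route: the complement of $\C$ is doubly connected, its Schottky double is a genus-one surface, one uniformizes by Jacobi elliptic functions, writes the Green's function with pole at infinity as the logarithm of a quotient of theta functions plus a linear correction, and reads off the Robin constant $-\log\CAP(\C)$ at the preimage of $\infty$. The closing observation that $\Theta^2(0)=2k'K/\pi$ converts the first expression in \eqref{cap1} into the second is correct. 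So the plan points in the right direction.

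The gap is that the plan is all there is: every step carrying actual mathematical content is deferred to ``routine bookkeeping,'' and at least one of those steps is not routine in the form you state it. Concretely: the cross-ratio of the endpoints in the order $(-1,\alpha;\beta,1)$ equals $(1-\alpha)(1+\beta)/(2(\beta-\alpha))=1/k^2$ by \eqref{k}, whereas the cross-ratio of the corner images $(\pm1,\pm1/k)$ of the standard rectangle under $\sn(\cdot,k)$ in the corresponding order is $(1+k)^2/(4k)$; these do not match, so ``matching cross-ratios'' does not by itself force the modulus \eqref{k} --- one must first fix the correct degree-two composition (e.g.\ $z=1-2\sn^2(u,k)$, consistent with \eqref{AlphaBeta}) and the correct boundary correspondence, and only then does \eqref{k} emerge. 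Likewise, the location of $u_\infty$, the residue $c_{-1}$ of $z(u)$ there, the explicit theta-quotient form of the Green's function, and the chain of translation and product identities reducing the Robin constant to $\Theta^4(0)/(2\,\dn^2(\laK)\,\Theta^4(\laK))$ are all asserted to exist rather than exhibited. As submitted, this is a credible outline of a proof, not a proof; to stand on its own it needs those computations carried out (or a precise citation, e.g.\ to \cite{Achieser-1930}, for each of them), and it is exactly in the steps you label as bookkeeping that the correctness of the specific constants in \eqref{k}, \eqref{sn}, and \eqref{cap1} is decided.
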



The problem in operating with formula\,\eqref{cap1} is the difficult connection of the modulus $k$ and the parameter $\lambda$ with the values $\alpha$ and $\beta$ via formulae \eqref{k} and \eqref{sn}, where the modulus $k$ appear both in the term $K\equiv{K}(k)$ and as the parameter of the elliptic function $\sn(u)\equiv\sn(u,k)$. With the help of some inequalities and monotonicity properties of Jacobi's elliptic and theta functions given and proved in Section\,3, we are able to prove an upper bound for $\CAP(\C)$ in terms of elementary functions of $\alpha$ and $\beta$ in Section\,2. Concluding this section, we collect some useful properties of $\alpha$ and $\beta$ (as functions of $k$ and $\lambda$) and of the capacity $\CAP(\C)$ in the following remark.


\begin{remark}\hfill{}
\begin{enumerate}
\item By \eqref{k} and \eqref{sn},
\begin{equation}\label{cndn}
\cn^2(\laK)=\frac{1+\alpha}{2},\quad\dn^2(\laK)=\frac{1+\alpha}{1+\beta}
\end{equation}
and
\begin{equation}\label{k'}
{k'}^2=\frac{(1+\alpha)(1-\beta)}{(1-\alpha)(1+\beta)}.
\end{equation}
\item By \eqref{k} and \eqref{sn}, there is a one-to-one correspondence between $\{\alpha,\beta\}$, $-1<\alpha<\beta<1$, and $\{k,\lambda\}$, $0<k<1$, $0<\lambda<1$. Moreover, for $\alpha\equiv\alpha(k,\lambda)$ and $\beta\equiv\beta(k,\lambda)$, we get
\begin{equation}\label{AlphaBeta}
\begin{aligned}
\alpha(k,\lambda)&=1-2\,\sn^2(\lambda{K}),\\
\beta(k,\lambda)&=\frac{2\,\cn^2(\lambda{K})}{\dn^2(\lambda{K})}-1=2\,\sn^2((1+\lambda)K)-1.
\end{aligned}
\end{equation}
\item Let $0<k<1$ be fixed. By \eqref{AlphaBeta}, both, $\alpha(k,\lambda)$ and $\beta(k,\lambda)$, are strictly monotone decreasing functions of $\lambda$ with
\[
\lim_{\lambda\to0}\alpha(k,\lambda)=\lim_{\lambda\to0}\beta(k,\lambda)=1,\quad
\lim_{\lambda\to1}\alpha(k,\lambda)=\lim_{\lambda\to1}\beta(k,\lambda)=-1.
\]
\item Let $0<\lambda<1$ be fixed. By \eqref{AlphaBeta} and \cite[Sect.\,1 \& 4]{CarlsonTodd}, $\alpha(k,\lambda)$ and $\beta(k,\lambda)$ is a strictly monotone decreasing and increasing function of $k$, respectively. Moreover,
\[
\lim_{k\to0}\alpha(k,\lambda)=\lim_{k\to0}\beta(k,\lambda)=\cos(\lambda\pi),\quad
-\lim_{k\to1}\alpha(k,\lambda)=\lim_{k\to1}\beta(k,\lambda)=1,
\]
i.e., for the limiting case $k\to0$, we get $\C\to[-1,1]$, and for $k\to1$, $\C$ vanishes.
\item By \eqref{AlphaBeta}, for $\lambda=\frac{1}{2}$, $0<\lambda<\frac{1}{2}$, $\frac{1}{2}<\lambda<1$, there is $\alpha+\beta=0$, $\alpha+\beta>0$, $\alpha+\beta<0$, respectively.
\item If $\lambda$ changes to $1{-}\lambda$, then $\{\alpha,\beta\}$ changes to $\{-\beta,-\alpha\}$ and the capacity remains the same, i.e.,
\begin{equation}\label{cap2}
\CAP(\C)=\frac{\Theta^4(0)}{2\,\dn^2((1-\lambda)K)\Theta^4((1-\lambda)K)}\
=\frac{2{k'}^2K^2}{\pi^2\dn^2((1-\lambda)K)\Theta^4((1-\lambda)K)}.
\end{equation}
\item The capacity is monotone, i.e., for $-1<\alpha_1\leq\alpha_2<\beta_2\leq\beta_1<1$,
\[
\CAP(C_{\alpha_1,\beta_1})\leq\CAP(C_{\alpha_2,\beta_2}).
\]
\item Let $C$ be any compact set in the complex plane, then for $a>0$
\begin{equation}
\CAP(a\,C)=a\,\CAP(C).
\end{equation}
\item By (vii), for $\alpha+\beta\geq0$, there is a trivial lower bound for $\CAP(\C)$ given by
\begin{equation}\label{LB1}
\CAP(\C)\geq\CAP([-1,-\beta]\cup[\beta,1]=\tfrac{1}{2}\sqrt{1-\beta^2}.
\end{equation}
\item By (vii), for $\alpha+\beta\geq0$ and $\alpha<0$, there is a trivial upper bound for $\CAP(\C)$ given by
\begin{equation}
\CAP(\C)\leq\CAP([-1,\alpha]\cup[-\alpha,1])=\tfrac{1}{2}\sqrt{1-\alpha^2}.
\end{equation}
Again by (vii), another trivial upper bound for all $-1<\alpha<\beta<1$ is
\begin{equation}
\CAP(\C)\leq\CAP([-1,1])=\frac{1}{2}.
\end{equation}
\item Another lower bound can be extracted from \cite{Pommerenke}
\begin{equation}\label{LB2}
\CAP(\C)\ge\CAP([-1,\alpha])+\CAP([\beta,1])=\frac{1+\alpha}{4}+\frac{1-\beta}{4}.
\end{equation}
\item Finally, another upper bound can be derived from an inequality of Gillis\,\cite{Gillis}, which, in our special case of two intervals, reads as follows:
\begin{equation}\label{UB1}
\CAP(\C)\leq2\exp\Bigl(\frac{\log\frac{1+\alpha}{8}\,\log\frac{1-\beta}{8}}{\log\frac{(1+\alpha)(1-\beta)}{64}}\Bigr)
\end{equation}
\item Let $\alpha\in(-1,1)$ be fixed. Then, by \eqref{LB2} and \eqref{UB1},
\begin{equation}
\lim_{\beta\to1}\CAP(\C)=\frac{1+\alpha}{4}=\CAP([-1,\alpha]).
\end{equation}
Note that this is not true for the minimum deviation $L_n(\C)$.
\item Let $\varphi,\psi\in(0,\pi)$ such that $\cos\varphi=\alpha$, $\cos\psi=\beta$, and let
\begin{equation}
A:=\bigl\{z\in\mathbb{C}:|z|=1,\arg{z}\in[-\psi,\psi]\cup[\varphi,2\pi-\varphi]\bigr\},
\end{equation}
i.e., $\C$ is the projection of $A$ onto the real axis. Then, by a result of Robinson\,\cite{Robinson},
\begin{equation}
\CAP(A)=\sqrt{2\,\CAP(\C)}.
\end{equation}
\end{enumerate}
\end{remark}

\section{Main Results}


Let us first give a lower bound for the logarithmic capacity of two intervals $\C$, proved by Alexander Yu.\ Solynin\,\cite[Sect.\,2.2]{Solynin} (in fact he derived a lower bound for the logarithmic capacity of several intervals). In our notation, his result reads as follows:


\begin{theorem}[Solynin\,\cite{Solynin}]
Let $-1<\alpha<\beta<1$ and define
\[
\varphi:=\arccos(\alpha), \qquad \psi:=\arccos(\beta),
\]
then
\begin{equation}\label{LB-Solynin}
\CAP(\C)\geq\frac{1}{2}\max_{\delta\in[\psi,\varphi]}\Bigl[
\Bigl(\sin\bigl(\frac{\psi\pi}{2\delta}\bigr)\Bigr)^{2\delta^{2}/\pi^{2}}
\Bigl(\sin\bigl(\frac{(\pi-\varphi)\pi}{2(\pi-\delta)}\bigr)\Bigr)^{2(\pi-\delta)^{2}/\pi^{2}}\Bigl].
\end{equation}
\end{theorem}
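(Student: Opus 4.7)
The natural strategy is to transfer the problem to arcs on the unit circle and then exploit a one-parameter family of conformal decompositions of the sphere, indexed by $\delta\in[\psi,\varphi]$. Using Robinson's identity $\CAP(A)=\sqrt{2\,\CAP(\C)}$ from Remark\,(xv), the inequality \eqref{LB-Solynin} becomes a lower bound of the form $\CAP(A)^{2}\ge F(\delta)$ for each fixed $\delta\in[\psi,\varphi]$, where $F(\delta)$ is the expression in brackets on the right; the maximum over $\delta$ then comes for free. The set $A\subset\{|z|=1\}$ is a pair of closed arcs, one about $z=1$ of half-opening $\psi$ and one about $z=-1$ of half-opening $\pi-\varphi$, symmetric under complex conjugation, and the task is reduced to estimating its logarithmic capacity.

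For any $\delta\in[\psi,\varphi]$, the vertical chord joining $e^{\pm\ii\delta}$ misses the interior of $A$ and separates its two components into disjoint lenses. Each lens is uniformised to a half-plane by composing the M\"obius map $\zeta=(z-e^{\ii\delta})/(z-e^{-\ii\delta})$, which sends the lens to a sector of opening $\delta$ (respectively $\pi-\delta$), with a power map that opens the sector. Since $\CAP$ is a global invariant of the set rather than a conformal invariant of its complement, one cannot simply read off capacities on the image side. The correct machinery is the condenser / reduced-modulus formulation: work with the Green's function of the complement of $A$ with pole at an auxiliary point $z_{0}$ placed in the circular gap between the two components of $A$, and exploit that the relevant reduced modulus rescales under $\zeta\mapsto\zeta^{\pi/\delta}$ and $\zeta\mapsto\zeta^{\pi/(\pi-\delta)}$ by the factors $\delta/\pi$ and $(\pi-\delta)/\pi$. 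The image of each arc of $A$ is then a symmetric boundary interval whose normalised capacity supplies a trigonometric factor of the form $\sin(\psi\pi/(2\delta))$ and $\sin((\pi-\varphi)\pi/(2(\pi-\delta)))$ respectively; the squaring of the reduced moduli combined with Robinson's identity produces the exponents $2\delta^{2}/\pi^{2}$ and $2(\pi-\delta)^{2}/\pi^{2}$.

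The main obstacle is precisely the combination step. Logarithmic capacity is not additive under disjoint unions, so stitching the two half-plane estimates into a single lower bound for $\CAP(A)$ requires a careful application of the Gr\"otzsch--Teichm\"uller modulus inequality (or Pfluger's theorem) to the curve family in the sphere that separates $A$ from $z_{0}$, with the chord $\Re z=\cos\delta$ providing the cut. Making the normalisations in each conformal piece match so that the trigonometric factors and the exponents emerge in exactly the stated form -- rather than some equivalent expression with different constants -- is where the real work lies, and I would expect the proof in \cite{Solynin} to devote most of its effort to this geometric/extremal-length bookkeeping.
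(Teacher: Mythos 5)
First, note that the paper does not prove this theorem at all: it is quoted verbatim from Solynin \cite[Sect.\,2.2]{Solynin} as a known result, so there is no in-paper argument to compare yours against. Judged on its own terms, your proposal is a plausible roadmap in the spirit of Solynin's extremal-partition method, but it is not a proof, and you essentially say so yourself: the entire content of the theorem is the step you defer as ``the real work,'' namely how the two per-lens estimates combine into a single lower bound for $\CAP(A)$ with exactly the weights $2\delta^{2}/\pi^{2}$ and $2(\pi-\delta)^{2}/\pi^{2}$ and exactly the factors $\sin(\psi\pi/(2\delta))$ and $\sin((\pi-\varphi)\pi/(2(\pi-\delta)))$. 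Waving at ``Gr\"otzsch--Teichm\"uller'' or ``Pfluger'' does not supply this; the actual tool is the inequality for reduced modules under decomposition into non-overlapping subdomains (Lavrentiev/Jenkins/Kuz'mina/Solynin), and one must both state that inequality in the correct weighted form and explicitly compute the reduced modules of the two model digons to see the trigonometric factors and the quadratic exponents emerge. None of that computation appears in your sketch, so the bound \eqref{LB-Solynin} is asserted rather than derived.

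Two further points deserve attention. (1) Your pole placement is suspect: $\CAP$ is governed by the Robin constant, i.e.\ the Green's function of the complement with pole at \emph{infinity}; under the Zhukovsky correspondence behind Robinson's identity the natural poles for the circular-arc set $A$ are at $0$ and $\infty$, not at an ``auxiliary point $z_{0}$ in the circular gap'' (such a point maps to a finite real point in the complement of $\C$ and yields a different quantity). (2) The chord joining $e^{\pm\ii\delta}$ does not separate the two components of $A$ in the sphere --- only the full line $\Re z=\cos\delta$ (a circle through $\infty$) does --- so the decomposition of the complement has to be set up with that circle, and the exterior of the unit disk must be accounted for. These are exactly the places where an extremal-length argument can silently go wrong, so the gap is genuine rather than cosmetic.
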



\begin{remark}
Numerical calculations suggest that the optimal $\delta$ in \eqref{LB-Solynin}, which cannot be calculated analytically, may be roughly approximated by $\delta=(\psi+\varphi)/2$. With the optimal $\delta$, the lower bound \eqref{LB-Solynin} is excellent, but also for $\delta=(\psi+\varphi)/2$, the lower bound \eqref{LB-Solynin} is very good.
\end{remark}


In addition to \eqref{LB-Solynin}, we give another lower bound for the capacity of $\C$ in very simple functions of $\alpha$ and $\beta$.


\begin{theorem}
Let $-1<\alpha<\beta<1$, then
\begin{equation}\label{LB3}
\CAP(\C)\geq\frac{\sqrt{k'}}{1+k'}=\frac{\sqrt[4]{(1-\alpha^2)(1-\beta^2)}}
{\sqrt{(1-\alpha)(1+\beta)}+\sqrt{(1+\alpha)(1-\beta)}},
\end{equation}
where equality is attained for $\alpha+\beta=0$.
\end{theorem}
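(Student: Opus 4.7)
\medskip\noindent\textbf{Proof plan.}
The strategy is to view $\CAP(\C)$ via Achieser's formula \eqref{cap1} as a function of $\lambda\in(0,1)$ at a fixed value of the modulus $k$ (equivalently, of $k'$), and to show that this function is minimised precisely at $\lambda=1/2$. By item (v) of the Remark, $\lambda=1/2$ corresponds exactly to $\alpha+\beta=0$, so this single statement both establishes the inequality in \eqref{LB3} and pins down the equality case.

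First I would verify equality at $\lambda=1/2$. The value $\dn^2(K/2)=k'$ is classical, so only $\Theta^4(K/2)$ needs real work. Applying Jacobi's theta-addition formula
$\Theta(u+v)\Theta(u-v)\Theta^2(0)=\Theta^2(u)\Theta^2(v)-H^2(u)H^2(v)$
at $u=v=K/2$ and using $H(u)=\sqrt{k}\,\sn(u)\,\Theta(u)$, $\sn^2(K/2)=1/(1+k')$, and $\Theta(K)=\Theta(0)/\sqrt{k'}$, a short simplification based on $(1+k')^2-k^2=2k'(1+k')$ gives
\[
\Theta^4(K/2)\;=\;\frac{(1+k')\,\Theta^4(0)}{2\,k'^{3/2}},
\]
so that \eqref{cap1} evaluates to precisely $\sqrt{k'}/(1+k')$ at $\lambda=1/2$.

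Next I would symmetrise the general expression in the denominator of \eqref{cap1}. From $\dn(u)=\sqrt{k'}\,\Theta_1(u)/\Theta(u)$ together with $\Theta_1(u)=\Theta(u+K)$, the $2K$-periodicity of $\Theta$ and its evenness yield $\Theta_1(\lambda K)=\Theta((1-\lambda)K)$, hence
\[
\dn^2(\lambda K)\,\Theta^4(\lambda K)\;=\;k'\,\Theta^2(\lambda K)\,\Theta^2((1-\lambda)K).
\]
Combining with the value of $\Theta^4(K/2)$ just computed, the bound \eqref{LB3} becomes equivalent to
\[
\Theta(\lambda K)\,\Theta\bigl((1-\lambda)K\bigr)\;\le\;\Theta^2(K/2)\qquad\text{for all }\lambda\in(0,1).
\]

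The final step is to prove this inequality by means of the classical infinite product
\[
\Theta(u)\;=\;\prod_{n=1}^{\infty}(1-q^{2n})\bigl(1-2q^{2n-1}\cos(\pi u/K)+q^{4n-2}\bigr),\qquad q=e^{-\pi K'/K}.
\]
Because $\cos(\pi(K-u)/K)=-\cos(\pi u/K)$, the two factors at level $n$ in the product for $\Theta(u)\Theta(K-u)$ multiply to
\[
(1-q^{2n})^2\bigl[(1+q^{4n-2})^2-4q^{4n-2}\cos^2(\pi u/K)\bigr],
\]
which is a strictly decreasing function of $\cos^2(\pi u/K)\in[0,1]$; hence each factor is maximised exactly when $\cos(\pi u/K)=0$, that is, at $u=K/2$. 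Taking the product over $n$ yields the desired inequality term by term, with equality iff $\lambda=1/2$. The only real technical work in the plan is the evaluation of $\Theta^4(K/2)$ via the addition formula; once the Jacobi product for $\Theta(u)\Theta(K-u)$ has been written down, its extremal property at $u=K/2$ is immediate.
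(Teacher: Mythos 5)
Your argument is correct, but it reaches the minimum at $\lambda=\tfrac12$ by a genuinely different route than the paper. The equality case is, in substance, the same in both: your evaluation of $\Theta^4(\KK)$ via the addition formula at $u=v=\KK$ is exactly the duplication-formula computation of Lemma~\ref{Lemma_ThetaK2}, and it checks out, since $(1+k')^2-k^2=2k'(1+k')$ gives $\Theta^4(\KK)=(1+k')\Theta^4(0)/(2{k'}^{3/2})$ and hence the value $\sqrt{k'}/(1+k')$. The divergence is in proving that $\lambda=\tfrac12$ is the minimizer at fixed $k$. The paper shows (Lemma~\ref{Lemma_DnTheta}) that $u\mapsto\dn^2(u)\,\Theta^4(u)$ is strictly increasing on $[0,\KK]$ by a concavity analysis of $g(u)=2\zn(u)-k^2\sn(u)\cn(u)/\dn(u)$, and then invokes the symmetry \eqref{cap2} for $\lambda>\tfrac12$. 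You instead make the $\lambda\leftrightarrow1-\lambda$ symmetry structural from the start: using $\dn(u)=\sqrt{k'}\,\Theta_1(u)/\Theta(u)$ and $\Theta_1(\laK)=\Theta((1-\lambda)K)$ (which is the paper's own identity $\Theta_1(u+K)=\Theta(u)$ combined with evenness), you rewrite the denominator of \eqref{cap1} as $k'\,\Theta^2(\laK)\,\Theta^2((1-\lambda)K)$ and reduce \eqref{LB3} to $\Theta(u)\Theta(K-u)\leq\Theta^2(\KK)$. Your factorwise maximization in the $q$-product is valid: each level-$n$ factor $(1-q^{2n})^2\bigl[(1+q^{4n-2})^2-4q^{4n-2}\cos^2(\pi u/K)\bigr]$ is bounded below by $(1-q^{2n})^2(1-q^{4n-2})^2>0$ and is strictly decreasing in $\cos^2(\pi u/K)$, so the product is maximal exactly at $u=\KK$. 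Your route is arguably more transparent and delivers the equality characterization for free; the paper's route yields the stronger, independently useful fact that $\CAP(\C)$ is strictly monotone in $\lambda$ on each half-interval, not merely minimal at the midpoint.
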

\begin{proof}
Let $0<k<1$ be fixed, then, by \eqref{cap1} and Lemma\,\ref{Lemma_DnTheta}, $\CAP(\C)$ is strictly monotone decreasing in $\lambda$, $0\leq\lambda\leq\frac{1}{2}$. Thus, by Lemma\,\ref{Lemma_ThetaK2} and $\dn(\KK)=\sqrt{k'}$,
\[
\CAP(\C)\geq\frac{2{k'}^2K^2}{\pi^2\dn^2(\KK)\,\Theta^4(\KK)}=\frac{\sqrt{k'}}{1+k'}.
\]
For $\frac{1}{2}<\lambda<1$, the inequality follows by \eqref{cap2}.
\end{proof}


\begin{remark}
For any $\alpha,\beta$, $-1<\alpha<\beta<1$, $\alpha+\beta\geq0$, the lower bound in \eqref{LB3} is greater (i.e.\ better) than the lower bound in \eqref{LB1}.
\end{remark}


The next theorem contains the main result of the paper, an upper bound for the logarithmic capacity of two intervals $\C$.


\begin{theorem}\label{Thm_main}
Let $-1<\alpha<\beta<1$ and let $k\in(0,1)$ and $\lambda\in(0,1)$ be given by \eqref{k} and \eqref{sn}, respectively, then
\begin{equation}\label{UB2}
\CAP(\C)\leq\frac{1}{2}\,\dn^2(\laK)\,\exp\Bigl(2(E/K-{k'}^2)\,\log^2\frac{1+\sn(\laK)}{\cn(\laK)}\Bigr),
\end{equation}
where equality is attained for the limiting case $\lambda\to0$.
\end{theorem}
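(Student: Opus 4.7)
My plan is to start from the representation \eqref{cap1} and take logarithms: the claim \eqref{UB2} is equivalent to
\[
\log\frac{\Theta(0)}{\dn(u)\Theta(u)} \leq \tfrac{1}{2}\bigl(E/K-{k'}^2\bigr)\,f(u)^2,
\qquad u:=\laK\in(0,K),
\]
where I set $f(u):=\log\tfrac{1+\sn u}{\cn u}$, so that $f(0)=0$ and $f'(u)=\dn(u)/\cn(u)$. By the symmetry \eqref{cap2} under $\lambda\mapsto 1-\lambda$, it suffices to handle $u\in(0,K/2]$. The claimed equality as $\lambda\to0$ will correspond to $G(0)=0$ in the notation below, so it requires no separate argument.

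Let $G(u)$ denote the left-hand side minus the right-hand side of the reduced inequality; one has $G(0)=0$, so it is natural to aim at the differential inequality $G'(u)\le 0$ on $(0,K/2]$. Using $\zn=\Theta'/\Theta$ and $(\log\dn u)'=-k^{2}\sn u\cn u/\dn u$, this reads
\[
k^{2}\,\frac{\sn u\,\cn u}{\dn u} - \zn(u) \;\leq\; \bigl(E/K-{k'}^2\bigr)\,f(u)\,\frac{\dn u}{\cn u}.
\]
The left-hand side has an elegant closed form via Jacobi's addition formula $\zn(u+v)=\zn(u)+\zn(v)-k^{2}\sn u\sn v\sn(u+v)$: applied with $v=K-u$ and using $\zn(K)=0$, $\sn(K-u)=\cn u/\dn u$, it yields $k^{2}\sn u\cn u/\dn u-\zn(u)=\zn(K-u)$. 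The target inequality therefore reduces to the clean elliptic estimate
\[
\zn(K-u) \;\leq\; \bigl(E/K-{k'}^2\bigr)\,f(u)\,\frac{\dn u}{\cn u}, \qquad u\in(0,K/2],
\]
which is the key estimate I would hope to extract from (or prove by means of) the lemmas of Section~3.

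The hard part is this last inequality, which is extraordinarily sharp near $u=0$: both sides share the common leading behaviour $(E/K-{k'}^2)u+O(u^{3})$, so the first-order information already saturates and one needs genuine control at order $u^{3}$. Consequently $G(u)=O(u^{4})$, which is precisely what makes the bound tight in the limit $\lambda\to 0$ but also rules out any crude comparison argument. A natural route is to use the primitive form
\[
\zn(K-u)=\int_{0}^{u}\Bigl[\frac{E}{K}-\frac{{k'}^2}{\dn^{2}(v)}\Bigr]\,dv,
\]
obtained from $\zn(u)=\int_{0}^{u}\dn^{2}(v)\,dv-uE/K$ by the reflection $v\mapsto K-v$ together with $\dn(K-v)=k'/\dn(v)$, paired with an analogous integral representation of $f(u)\,\dn u/\cn u$; one then tries to establish the inequality pointwise at the level of integrands, exploiting the monotonicity properties of $\dn$ and $f$ provided by Section~3. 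Equality in the limiting case $\lambda\to 0$ follows at once from $G(0)=0$.
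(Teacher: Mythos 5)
Your reduction is sound as far as it goes: writing $u=\laK$, taking logarithms, and differentiating does convert \eqref{UB2} into the single estimate $\zn(K-u)\leq(E/K-{k'}^2)\,f(u)\,\dn(u)/\cn(u)$ with $f(u)=\log\frac{1+\sn(u)}{\cn(u)}$, and your use of the addition theorem to identify $k^2\sn(u)\cn(u)/\dn(u)-\zn(u)$ with $\zn(K-u)$ is correct. But the proposal stops exactly where the proof has to begin: the key estimate is only ``hoped for'', and the suggested route (matching integral representations and comparing integrands pointwise) is never carried out, so as written this is a plan rather than a proof. There are also two concrete errors. First, the reduction to $u\in(0,\KK]$ via \eqref{cap2} is invalid: the substitution $\lambda\mapsto1-\lambda$ leaves $\CAP(\C)$ unchanged but not the right-hand side of \eqref{UB2}, so proving the bound for $\lambda\leq\tfrac{1}{2}$ only yields, for $\lambda>\tfrac{1}{2}$, an upper bound with $(1-\lambda)K$ in place of $\laK$, which is not the claimed inequality. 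Second, your assertion that the first-order agreement of the two sides ``rules out any crude comparison argument'' is false: the chain $\zn(K-u)\leq(E/K-{k'}^2)\,u\leq(E/K-{k'}^2)\,f(u)\,\dn(u)/\cn(u)$ closes the argument for all $u\in(0,K)$, the first step being Lemma~\ref{Lemma_BoundZn} evaluated at $K-u$ and the second following from $u\leq f(u)$ (Lemma~\ref{Lemma_BoundCnDn}) together with $\dn(u)\geq\cn(u)$ and $E/K>{k'}^2$. Had you inserted this middle term, your differential-inequality argument would be complete and would even make the symmetry step unnecessary.

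For comparison, the paper argues in integrated form: it starts from \eqref{cap2}, bounds $\Theta(0)/\Theta((1-\lambda)K)$ by integrating the linear majorant of $\zn$ (Lemmas~\ref{Lemma_BoundZn} and \ref{Lemma_BoundTheta}), uses ${k'}^2/\dn^2((1-\lambda)K)=\dn^2(\laK)$, and finally replaces $\laK$ by $\log\frac{1+\sn(\laK)}{\cn(\laK)}$ in the exponent via Lemma~\ref{Lemma_BoundCnDn}. Your derivative-level version is essentially the infinitesimal form of the same two estimates, so once repaired it would be a legitimate, mildly different organization of the same proof; in its present state, however, the central inequality is unproven.
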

\begin{proof}
\begin{align*}
\CAP(\C)&=\frac{1}{2\,\dn^2((1-\lambda)K)}\Bigl(\frac{\Theta(0)}{\Theta((1-\lambda)K)}\Bigr)^4\quad\text{by \eqref{cap2}}\\
&\leq\frac{{k'}^2}{2\,\dn^2((1-\lambda)K)}\,\exp\bigl(2(E/K-{k'}^2)\lambda^2K^2\bigr)\quad\text{by Lemma \ref{Lemma_BoundTheta}}\\
&\leq\frac{1}{2}\,\dn^2(\laK)\,\exp\bigl(2(E/K-{k'}^2)\log^2\frac{1+\sn(\laK)}{\cn(\laK)}\bigr)\quad\text{by Lemma \ref{Lemma_BoundCnDn}}
\end{align*}
\end{proof}


\begin{remark}\hfill{}
\begin{enumerate}
\item Upper bound \eqref{UB2} is only good in case of $\alpha+\beta\geq0$, i.e., $0<\lambda\leq\frac{1}{2}$. For $\alpha+\beta<0$, one has to replace $\{\alpha,\beta\}$ by $\{-\beta,-\alpha\}$.
\item Analogously to \eqref{UB2}, we also derived a lower bound for $\CAP(\C)$, which turned out to be less than the lower bound \eqref{LB-Solynin}, thus we dismissed it.
\item In Figure\,1, we compare the capacity $\CAP(\C)$ with the upper bounds \eqref{UB2} and \eqref{UB1}. The graphs of these three functions are plotted for $\alpha\in\{\pm0.7,\pm0.4,\pm0.1\}$ and $|\alpha|\leq\beta\leq{1}$.
\item By \eqref{k}, \eqref{sn}, \eqref{cndn}, and \eqref{k'}, inequality\eqref{UB2} of Theorem\,\ref{Thm_main} may be written in the form
\begin{equation}\label{UB3}
\CAP(\C)\leq\frac{1+\alpha}{2(1+\beta)}\exp\Bigl[2\Bigl(\frac{E}{K}-\frac{(1+\alpha)(1-\beta)}{(1-\alpha)(1+\beta)}\Bigr)
\log^2\frac{\sqrt{2}+\sqrt{1-\alpha}}{\sqrt{1+\alpha}}\Bigr]
\end{equation}
In this bound, there appear $K\equiv{K}(k)$ and $E\equiv{E}(k)$, which cannot be expressed by elementary functions of $k$. Nevertheless, there exists a couple of inequalities in the literature, see \cite{Alzer}, \cite{AlzerQiu}, \cite{AVV1990}, \cite{AVV1992}, \cite{AVV}, \cite{CG}, and \cite{QV}, involving elementary functions of $k$, from which we collect the best ones in the following lemma. Hence, inequality \eqref{UB3}, together with \eqref{ineq_K} and \eqref{ineq_E}, using \eqref{k} and \eqref{k'}, gives an upper bound for $\CAP(\C)$ \emph{in terms of elementary functions} of $\alpha$ and $\beta$.
\end{enumerate}
\end{remark}


\begin{figure}[ht]
\begin{center}
\includegraphics[scale=0.8]{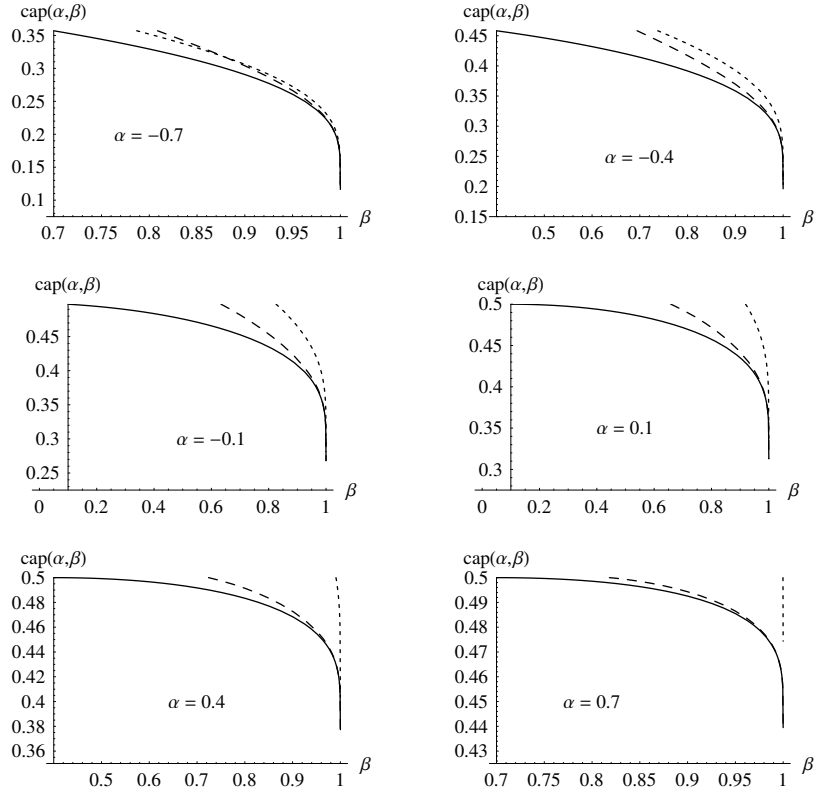}
\caption{\label{Fig} The capacity $\CAP(\C)$ (solid line) with the upper bounds \eqref{UB2} (dashed line) and \eqref{UB1} (dotted line) for $\alpha\in\{-0.7,-0.4,-0.1,0.1,0.4,0.7\}$ and $|\alpha|\leq\beta\leq{1}$}
\end{center}
\end{figure}


\begin{lemma} \label{Lemma_KE}
Let $0<k<1$.
\begin{enumerate}
\item For the complete elliptic integral of the first kind, we have the inequalities
\begin{equation}\label{ineq_K}
\max\bigl\{K_1,K_2\bigr\}<K(k)<\min\bigl\{K_3,K_4,K_5\bigr\},
\end{equation}
where
\begin{equation}
\begin{aligned}
K_1&:=\frac{\pi}{2}\Bigl(\frac{\tanh^{-1}(k)}{k}\Bigr)^{3/4},\\
K_2&:=\Bigl(1+\frac{{k'}^2}{4}\Bigr)\log\Bigl(\frac{4}{k'}\Bigr)-\frac{{k'}^2}{4},
\end{aligned}
\end{equation}
and
\begin{equation}
\begin{aligned}
K_3&:=\frac{\pi}{2}\Bigl(\frac{\frac{3}{4}\log(k')}{k'-1}+\frac{1}{2(1+k')}\Bigr),\\
K_4&:=\log\Bigl(\frac{4}{k'}+\bigl(e^{\pi/2}-4\bigr)(k')^{\gamma}\Bigr),\quad \gamma:=\frac{4-\frac{\pi}{4}\,e^{\pi/2}}{e^{\pi/2}-4},\\
K_5&:=\Bigl(1+\frac{{k'}^2}{4}\Bigr)\log\Bigl(\frac{4}{k'}\Bigr).
\end{aligned}
\end{equation}
\item For the complete elliptic integral of the second kind, we have the inequalities
\begin{equation}\label{ineq_E}
E_1<E(k)<E_2,
\end{equation}
where
\begin{equation}
E_1:=\frac{\pi}{2}\Bigl(\frac{1+{k'}^{3/2}}{2}\Bigr)^{2/3},\quad
E_2:=\frac{\pi}{2}\Bigl(\frac{1+{k'}^{\delta}}{2}\Bigr)^{1/\delta},\quad \delta:=\frac{\log(2)}{\log(\pi/2)}.
\end{equation}
\end{enumerate}
\end{lemma}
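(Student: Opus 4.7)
The plan is to establish each bound by appealing to the existing literature, since Lemma~\ref{Lemma_KE} is, as the preceding remark indicates, a \emph{collection} result that assembles the sharpest available elementary bounds for $K(k)$ and $E(k)$ from the papers already cited. I would not try to reprove everything from scratch; instead, for each of $K_1,\dots,K_5$ and $E_1,E_2$, I would locate the original source and reproduce the inequality with the notation aligned to the present paper.

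Concretely, I expect $K_1$ (with exponent $3/4$) to come from the Alzer / Alzer--Qiu analysis of $K(k)$ versus $\tanh^{-1}(k)/k$, while $K_2$ and its weaker sibling $K_5$ are refinements of the classical Landen asymptotic $K(k)\sim\log(4/k')$ and should be attributed to Anderson--Vamanamurthy--Vuorinen. The upper bounds $K_3$ and $K_4$ are Chu--Guo / Qiu--Vuorinen type estimates in which the constants and exponents (in particular $\gamma=(4-\tfrac{\pi}{4}e^{\pi/2})/(e^{\pi/2}-4)$ in $K_4$) are pinned down by matching $K(0)=\pi/2$ and the asymptotics $K(k)\sim\log(4/k')$ as $k\to 1^-$. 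For $E$, the bounds $E_1$ and $E_2$ are power--mean estimates of the form $\tfrac{\pi}{2}\bigl(\tfrac{1+{k'}^{p}}{2}\bigr)^{1/p}$ with the exponents $p=3/2$ and $p=\delta=\log(2)/\log(\pi/2)$ selected so as to be sharp respectively at $k\to 0^+$ and at both endpoints simultaneously.

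For completeness, to verify any single one of these inequalities I would follow the template used throughout these papers: set $f(k):=(\text{candidate bound})-K(k)$ (or the analogous quantity for $E$), evaluate $f$ at $k=0$ and in the limit $k\to 1^-$ using $K(0)=E(0)=\pi/2$ together with the known logarithmic asymptotics, and then establish monotonicity of $f$ (or of a suitable auxiliary quotient) by differentiating via the Legendre formulas $dK/dk=(E-{k'}^2K)/(k{k'}^2)$ and $dE/dk=(E-K)/k$, frequently invoking the monotone l'H\^opital rule of Anderson--Vamanamurthy--Vuorinen to reduce the sign of a quotient to the sign of a ratio of derivatives.

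The main obstacle is not the derivation of any individual inequality but the ``best of the literature'' claim: one must justify that taking $\max\{K_1,K_2\}$ and $\min\{K_3,K_4,K_5\}$ genuinely improves on using any single bound, by checking (numerically and via asymptotic analysis at $k\to 0^+$ and $k\to 1^-$) that each participating bound dominates on some subinterval of $(0,1)$ and is dominated on its complement, and that no further estimate from the cited works uniformly beats those collected here. This curatorial step is the only real work, and in the written proof I would expect it to be compressed into a pointer to \cite{Alzer}, \cite{AlzerQiu}, \cite{AVV1990}, \cite{AVV1992}, \cite{AVV}, \cite{CG}, \cite{QV}.
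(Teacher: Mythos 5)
Your proposal matches the paper's proof, which is purely a citation argument: $K_1$ and the upper bounds $K_3$, $K_4$ are from Alzer--Qiu (Theorems 18--20), $K_2$ from Chu--Guo, $K_5$ from Alzer, and both bounds on $E$ from Alzer--Qiu (Theorem 22), with some of your specific attributions shuffled but all drawn from the same cited pool. Note that the ``curatorial'' step you flag as the main obstacle is not actually needed: the lemma only asserts $\max\{K_1,K_2\}<K<\min\{K_3,K_4,K_5\}$, which follows immediately once each individual inequality holds, and the question of which bound wins where is deferred to the subsequent numerical remark.
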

\begin{proof}\hfill{}
\begin{enumerate}
\item $K_1<K(k)$ is proved in \cite[Theorem\,18]{AlzerQiu}, $K_2<K(k)$ can be found in \cite[equation\,(31)]{CG}. For $K(k)<K_3$ and $K(k)<K_4$ see Theorems\,19 and 20 of \cite{AlzerQiu}, $K(k)<K_5$ is proved in \cite{Alzer}.
\item Both inequalities are proved by Alzer and Qiu in \cite[Theorem\,22]{AlzerQiu}.
\end{enumerate}
\end{proof}


\begin{remark}
Numerical computations show that
\begin{equation}
\max\bigl\{K_1,K_2\bigr\}=
\begin{cases}
K_1\quad&\text{for }0<k\leq0.888\ldots\\
K_2&\text{for }0.888\ldots\leq{k}<1
\end{cases}
\end{equation}
and
\begin{equation}
\min\bigl\{K_3,K_4,K_5\bigr\}=
\begin{cases}
K_3\quad&\text{for }0<k\leq0.971\ldots\\
K_4\quad&\text{for }0.971\ldots\leq{k}\leq0.990\ldots\\
K_5&\text{for }0.990\ldots\leq{k}<1
\end{cases}
\end{equation}
\end{remark}

\section{Auxiliary Results for Jacobi's Elliptic and Theta Functions}


\begin{lemma}\label{Lemma_ThetaK2}
Let $0<k<1$. Then
\begin{equation}
\begin{aligned}
\Theta(\KK)&=\Theta_1(\KK)=\sqrt[4]{\tfrac{2}{\pi^2}(1+k')K^2\sqrt{k'}},\\
H(\KK)&=H_1(\KK)=\sqrt[4]{\tfrac{2}{\pi^2}(1-k')K^2\sqrt{k'}}.\\
\end{aligned}
\end{equation}
\end{lemma}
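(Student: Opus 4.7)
The plan is to split the proof into three stages: first equate $\Theta$ with $\Theta_1$ and $H$ with $H_1$ via quasi-periodicity; then evaluate $\Theta(\KK)$ via Landen/duplication identities for theta-nulls; and finally deduce $H(\KK)$ from $\Theta(\KK)$.

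For the first stage, I would use only the shift relations $\Theta_1(u)=\Theta(u+K)$ and $H_1(u)=H(u+K)$ combined with the period-and-parity properties $\Theta(u+2K)=\Theta(u)$, $\Theta(-u)=\Theta(u)$, $H(u+2K)=-H(u)$, $H(-u)=-H(u)$. At $u=\KK$ this immediately yields $\Theta_1(\KK)=\Theta(3K/2)=\Theta(-K/2)=\Theta(\KK)$, and analogously for $H$ the two sign flips cancel to give $H_1(\KK)=H(\KK)$.

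For the second stage, I would start from the Fourier representation $\Theta(u)=\theta_4(\pi u/(2K),q)$ with nome $q=e^{-\pi K'/K}$. At $u=\KK$ the argument is $\pi/4$, so the terms with odd index in the cosine series vanish since $\cos(n\pi/2)=0$ for odd $n$; reindexing the survivors by $n=2m$ shows $\Theta(\KK)=\theta_4(0,q^4)$. I would then apply the classical theta-null duplication identities
\[
\theta_4(0,q^2)^2=\theta_3(0,q)\theta_4(0,q),\qquad 2\,\theta_3(0,q^2)^2=\theta_3(0,q)^2+\theta_4(0,q)^2
\]
once more (with $q$ replaced by $q^2$) to obtain
\[
\Theta(\KK)^4=\theta_3(0,q^2)^2\,\theta_4(0,q^2)^2=\tfrac{1}{2}\bigl(\Theta_1(0)^2+\Theta(0)^2\bigr)\,\Theta_1(0)\,\Theta(0).
\]
Inserting the standard theta-null values $\Theta_1(0)^2=2K/\pi$ and $\Theta(0)^2=2Kk'/\pi$ yields the claimed formula $\Theta(\KK)^4=\tfrac{2}{\pi^2}(1+k')K^2\sqrt{k'}$.

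For the third stage I would use the relation $\sn(u)=H(u)/(\sqrt{k}\,\Theta(u))$ at $u=\KK$ together with the half-period value $\sn^2(\KK)=1/(1+k')$, which gives $H(\KK)^4=k^2\,\Theta(\KK)^4/(1+k')^2$; using $k^2=(1-k')(1+k')$ one factor of $(1+k')$ cancels and the desired formula $H(\KK)^4=\tfrac{2}{\pi^2}(1-k')K^2\sqrt{k'}$ drops out. The main obstacle is Stage~2: pinning down the absolute value of $\Theta(\KK)$ requires the Landen-type doubling identities for theta-nulls, which, while standard, are the non-trivial input here; they can be proved quickly from the triple-product expansions of $\theta_3$ and $\theta_4$, or simply cited from Whittaker--Watson or Lawden.
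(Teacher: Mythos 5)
Your proof is correct, but it takes a genuinely different route from the paper. The paper's argument is purely algebraic in the four theta functions: it specializes the duplication formulas $\Theta(2u)\,\Theta^3(0)=\Theta^4(u)-H^4(u)$ and $H(2u)\,\Theta(0)H_1(0)\Theta_1(0)=2H(u)\Theta(u)H_1(u)\Theta_1(u)$ at $u=\KK$ (using the same quasi-periodicity step as your Stage~1 to collapse $H_1,\Theta_1$ onto $H,\Theta$), which yields the pair of equations $A^2-B^2=4{k'}^{3/2}K^2/\pi^2$ and $2AB=4kK^2\sqrt{k'}/\pi^2$ for $A=\Theta^2(\KK)$, $B=H^2(\KK)$; since $k^2+{k'}^2=1$ one gets $A^2+B^2=4K^2\sqrt{k'}/\pi^2$ and both values fall out symmetrically. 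You instead identify $\Theta(\KK)=\theta_4(0,q^4)$ via the Fourier series, descend twice with the Landen theta-null identities to land on $\tfrac12\bigl(\Theta_1^2(0)+\Theta^2(0)\bigr)\Theta_1(0)\Theta(0)$, and then recover $H(\KK)$ from $\sn=H/(\sqrt{k}\,\Theta)$ together with $\sn^2(\KK)=1/(1+k')$. Both arguments ultimately rest on the same theta-null evaluations $\Theta_1^2(0)=2K/\pi$, $\Theta^2(0)=2k'K/\pi$; I checked your constants and they come out right ($\theta_4(0,q^4)^2=\theta_3(0,q^2)\theta_4(0,q^2)$, etc., and $k^2=(1-k')(1+k')$ cancels the extra factor in Stage~3). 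What the paper's route buys is symmetry — $\Theta(\KK)$ and $H(\KK)$ are obtained simultaneously from one $2\times2$ system, with no need for the half-argument value of $\sn$ or the nome-$q^4$ observation. What your route buys is that the value of $\Theta(\KK)$ is traced back transparently to theta-nulls at the transformed nome, and the $H$-value then comes essentially for free from a single elliptic-function identity. Two small points to make explicit if you write it up: you should note that $\Theta(\KK)>0$ and $H(\KK)>0$ so that the positive fourth roots are the right ones, and the half-argument value $\sn^2(\KK)=1/(1+k')$ deserves a citation (it is standard, e.g.\ Byrd--Friedman).
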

\begin{proof}
For the four theta-functions, the following duplication formulas hold (see \cite[p.\,21]{Lawden} or formula (1051.25) of \cite{ByrdFriedman})
\begin{equation}\label{Theta2u}
\begin{aligned}
H(2u)\,\Theta(0)\,H_1(0)\,\Theta_1(0)&=2H(u)\,\Theta(u)\,H_1(u)\,\Theta_1(u)\\
H_1(2u)\,H_1(0)\,\Theta^2(0)&=\Theta^2(u)\,H_1^2(u)-H^2(u)\,\Theta_1^2(u)\\
\Theta_1(2u)\,\Theta_1(0)\,\Theta^2(0)&=\Theta^2(u)\,\Theta_1^2(u)-H^2(u)\,H_1^2(u)\\
\Theta(2u)\,\Theta^3(0)&=\Theta_1^4(u)-H_1^4(u)=\Theta^4(u)-H^4(u)
\end{aligned}
\end{equation}
Further, by $\Theta(-u)=\Theta(u)$ and $\Theta_1(u+K)=\Theta(u)$,
\[
\Theta(\KK)=\Theta(-\KK)=\Theta_1(-\KK+K)=\Theta_1(\KK),
\]
and, by $H(u)=-H(-u)$ and $H_1(u+K)=-H(u)$,
\[
H(\KK)=-H(-\KK)=H_1(-\KK+K)=H_1(\KK).
\]
Hence, by \eqref{Theta2u}, setting $u=\KK$,
\begin{gather*}
\Theta^4(\KK)-H^4(\KK)=\Theta(K)\,\Theta^3(0)=\frac{4{k'}^{3/2}K^2}{\pi^2},\\
2\,\Theta^2(\KK)\,H^2(\KK)=H(K)\,\Theta(0)\,H_1(0)\,\Theta_1(0)=\frac{4kK^2\sqrt{k'}}{\pi^2}.
\end{gather*}
From these two equalities, the asserted formulas follow after some computation.
\end{proof}


\begin{lemma} \label{Lemma_DnTheta}
Let $0<k<1$. Then the function $\dn^2(u)\,\Theta^4(u)$ is strictly monotone increasing in $u$, $0\leq{u}\leq\KK$.
\end{lemma}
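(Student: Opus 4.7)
My first move is to eliminate the elliptic factor by using the standard theta representation
\[
\dn(u)=\sqrt{k'}\,\frac{\Theta_1(u)}{\Theta(u)}
\]
(see, e.g., \cite{Lawden} or \cite{ByrdFriedman}), which rewrites the function of interest as
\[
\dn^2(u)\,\Theta^4(u)=k'\,\bigl[\Theta(u)\,\Theta_1(u)\bigr]^2.
\]
Since $k'>0$ is a positive constant independent of $u$, the lemma reduces to showing that the product $\Theta(u)\,\Theta_1(u)$ is strictly monotone increasing on $[0,\KK]$.

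For this reduced claim I would invoke Jacobi's infinite product expansions, with nome $q:=e^{-\pi K'/K}\in(0,1)$:
\begin{align*}
\Theta(u)&=\prod_{n=1}^{\infty}(1-q^{2n})\bigl(1-2q^{2n-1}\cos(\pi u/K)+q^{4n-2}\bigr),\\
\Theta_1(u)&=\prod_{n=1}^{\infty}(1-q^{2n})\bigl(1+2q^{2n-1}\cos(\pi u/K)+q^{4n-2}\bigr).
\end{align*}
Multiplying factor by factor and using $(a-b)(a+b)=a^2-b^2$, the $u$-dependent factor for each $n$ collapses to
\[
(1+q^{4n-2})^2-4q^{4n-2}\cos^2(\pi u/K).
\]
On $[0,\KK]$ the argument $\pi u/K$ sweeps $[0,\pi/2]$, so $\cos^2(\pi u/K)$ is strictly decreasing from $1$ to $0$. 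Hence each factor is strictly increasing in $u$, and each factor is positive throughout: its minimum at $u=0$ equals $(1-q^{4n-2})^2>0$. A product of positive, strictly increasing factors is strictly increasing, completing the argument.

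I expect no real technical obstacle beyond citing the two theta identities correctly. The direct alternative --- computing the logarithmic derivative $(\log\dn^2\Theta^4)'=4\zn(u)-2k^2\sn(u)\cn(u)/\dn(u)$ and trying to show its positivity on $(0,\KK)$ --- is less clean, because this expression vanishes at \emph{both} endpoints (at $u=\KK$ via the addition identity $\zn(u)+\zn(K-u)=k^2\sn(u)\cn(u)/\dn(u)$ specialized at $u=\KK$), so one would need a finer concavity-type argument to conclude. The infinite-product route avoids this issue entirely.
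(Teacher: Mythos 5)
Your proof is correct, but it follows a genuinely different route from the paper's. The paper differentiates directly: it computes $\frac{\partial}{\partial u}\{\dn^2(u)\Theta^4(u)\}=2\dn^2(u)\Theta^4(u)\,g(u)$ with $g(u)=2\zn(u)-k^2\sn(u)\cn(u)/\dn(u)$, observes $g(0)=g(\KK)=0$, and then establishes $g>0$ on $(0,\KK)$ by exactly the ``finer concavity-type argument'' you anticipated would be needed, namely showing $g''(u)<0$ there (via a factorization of $g''$ whose sign is controlled by $h(u)=\cn(u)\dn(u)-k'\sn(u)$, which is decreasing and vanishes at $\KK$). Your route instead uses $\dn(u)=\sqrt{k'}\,\Theta_1(u)/\Theta(u)$ to collapse the function to $k'[\Theta(u)\Theta_1(u)]^2$ and then the Jacobi product expansions, where the paired factors become $(1+q^{4n-2})^2-4q^{4n-2}\cos^2(\pi u/K)$; all steps check out (the cross terms cancel correctly, each factor is bounded below by $(1-q^{4n-2})^2>0$, and the strict-monotonicity-of-an-infinite-product point is harmless since the ratio $P(u_2)/P(u_1)$ is a product of factors $\geq 1$ with at least one $>1$). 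What your approach buys is transparency: the endpoint $\KK$ appears naturally as the place where $\cos(\pi u/K)$ vanishes, the argument needs no differentiation at all, and you get for free that the function decreases again on $[\KK,K]$ by the symmetry of $\cos^2$. What the paper's approach buys is self-containedness within elliptic-function calculus (only derivative formulas for $\sn,\cn,\dn,\zn$, no product expansions or nome), at the cost of a second-derivative computation. Either proof is acceptable; yours is arguably cleaner.
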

\begin{proof}
Differentiating the above function gives
\[
\frac{\partial}{\partial{u}}\bigl\{\dn^2(u)\,\Theta^4(u)\bigr\}
=2\,\dn^2(u)\,\Theta^4(u)\Bigl(\underbrace{2\,\zn(u)-\frac{k^2\sn(u)\,\cn(u)}{\dn(u)}}_{=:g(u)}\Bigr).
\]
Note that $g(0)=g(\KK)=0$. We prove that $g(u)>0$ for $u\in(0,\KK)$ by showing that $g''(u)<0$ for $u\in(0,\KK)$. Indeed,
\[
g''(u)=-\frac{2k^4\sn(u)\,\cn(u)}{\dn^3(u)}\bigl(\cn(u)\,\dn(u)+k'\sn(u)\bigr)
\bigl(\underbrace{\cn(u)\,\dn(u)-k'\sn(u)}_{=:h(u)}\bigr)
\]
and $h(u)$ is strictly monotone decreasing for $u\in(0,\KK)$ and $h(\KK)=0$, thus $h(u)>0$, hence $g''(u)<0$ for $u\in(0,\KK)$. This completes the proof.
\end{proof}


\begin{lemma}\label{Lemma_MonCnDn}
Let $0<k<1$, then, for $0<u<K$, $\frac{\partial}{\partial{k}}\,\{\cn(u,k)\}>0$ and $\frac{\partial}{\partial{k}}\,\{\dn(u,k)\}<0$.
\end{lemma}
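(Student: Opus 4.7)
The plan is to parametrize via the Jacobi amplitude $\varphi=\operatorname{am}(u,k)\in(0,\pi/2)$, defined implicitly by
\[
u=\int_{0}^{\varphi}\frac{\dd\theta}{\Delta(k,\theta)}, \qquad \Delta(k,\theta):=\sqrt{1-k^{2}\sin^{2}\theta},
\]
so that $\sn(u,k)=\sin\varphi$, $\cn(u,k)=\cos\varphi$, and $\dn(u,k)=\Delta(k,\varphi)$. Fix $u\in(0,K)$ and view $\varphi$ as a function of $k$. Implicit differentiation of the above identity immediately yields
\[
\frac{\partial\varphi}{\partial k}=-\Delta(k,\varphi)\int_{0}^{\varphi}\frac{k\sin^{2}\theta}{\Delta^{3}(k,\theta)}\,\dd\theta<0
\]
for $\varphi\in(0,\pi/2)$ and $k\in(0,1)$. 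This single fact drives both assertions.

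The statement for $\cn$ is then immediate: since $\cn(u,k)=\cos\varphi$ with $\varphi\in(0,\pi/2)$, one has $\partial_{k}\cn(u,k)=-\sin\varphi\cdot\partial\varphi/\partial k>0$. For $\dn$ I would instead differentiate the square: from $\dn^{2}(u,k)=1-k^{2}\sin^{2}\varphi$ one obtains
\[
\frac{\partial}{\partial k}\dn^{2}(u,k)=-2k\sin\varphi\,\bigl(\sin\varphi+k\cos\varphi\cdot\partial\varphi/\partial k\bigr).
\]
Since $\dn(u,k)>0$, the desired inequality $\partial_{k}\dn(u,k)<0$ is equivalent to the bracketed factor being strictly positive, and substituting the formula above for $\partial\varphi/\partial k$ reduces it to
\[
\frac{\tan\varphi}{\Delta(k,\varphi)}>k^{2}\int_{0}^{\varphi}\frac{\sin^{2}\theta}{\Delta^{3}(k,\theta)}\,\dd\theta.
\]

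The main obstacle is verifying this last inequality, which I would handle by a derivative comparison in $\varphi$. Both sides vanish at $\varphi=0$. The derivative of the right-hand side is $k^{2}\sin^{2}\varphi/\Delta^{3}(k,\varphi)$, while a direct computation (using $\Delta_{\varphi}=-k^{2}\sin\varphi\cos\varphi/\Delta$) gives the derivative of the left-hand side as $\sec^{2}\varphi/\Delta(k,\varphi)+k^{2}\sin^{2}\varphi/\Delta^{3}(k,\varphi)$. The difference of the two derivatives is the strictly positive quantity $\sec^{2}\varphi/\Delta(k,\varphi)$, so the inequality holds throughout $(0,\pi/2)$, completing the proof. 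The $\dn$-case is genuinely the delicate one because $\dn(u,k)$ depends on $k$ both explicitly (through $k^{2}$) and implicitly (through $\varphi(k)$), and these two effects push in opposite directions; the inequality just verified quantifies that the explicit dependence dominates.
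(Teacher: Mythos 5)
Your proof is correct, but it takes a genuinely different route from the paper. The paper starts from the handbook formulas (710.52) and (710.53) of Byrd--Friedman, which express $\frac{\partial}{\partial k}\cn(u,k)$ and $\frac{\partial}{\partial k}\dn(u,k)$ as positive prefactors times $E(u)-{k'}^2u-\frac{k^2\sn(u)\cn(u)}{\dn(u)}$ and $E(u)-{k'}^2u-\frac{\sn(u)\dn(u)}{\cn(u)}$ respectively; it then shows each bracketed function has the right sign by noting it vanishes at $u=0$ and computing that its $u$-derivative is $\frac{k^2{k'}^2\sn^2(u)}{\dn^2(u)}>0$ (resp.\ $-\frac{{k'}^2}{\cn^2(u)}<0$). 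You instead work from first principles through the amplitude $\varphi=\operatorname{am}(u,k)$: implicit differentiation of the defining integral gives the sign of $\partial\varphi/\partial k$ at once, which settles the $\cn$ case immediately, and reduces the $\dn$ case to the integral inequality $\tan\varphi/\Delta(k,\varphi)>k^2\int_0^\varphi\sin^2\theta\,\Delta^{-3}(k,\theta)\,\dd\theta$, which you verify by the same ``both sides vanish at the origin, compare derivatives'' device the paper uses for its bracketed functions. I checked your computations ($\partial\varphi/\partial k$, the derivative of $\dn^2$, and the two $\varphi$-derivatives whose difference is $\sec^2\varphi/\Delta$) and they are all correct. What your approach buys is self-containedness: you never need the incomplete elliptic integral of the second kind or the tabulated $k$-derivative formulas, only the definition of the amplitude. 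What the paper's approach buys is brevity, at the cost of citing two nontrivial identities from a handbook. Both arguments are sound.
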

\begin{proof}
By (710.52) and (710.53) of \cite{ByrdFriedman},
\begin{align*}
\frac{\partial}{\partial{k}}\,\bigl\{\cn(u)\bigr\}&=\frac{\sn(u)\,\dn(u)}{k{k'}^2}\Bigl(
\underbrace{E(u)-{k'}^2u-\frac{k^2\sn(u)\,\cn(u)}{\dn(u)}}_{=:f(u)}\Bigr)\\
\frac{\partial}{\partial{k}}\,\bigl\{\dn(u)\bigr\}&=\frac{k\,\sn(u)\,\cn(u)}{{k'}^2}\Bigl(
\underbrace{E(u)-{k'}^2u-\frac{\sn(u)\,\dn(u)}{\cn(u)}}_{=:g(u)}\Bigr)
\end{align*}
Hence, it suffices to show that $f(u)>0$ and $g(u)<0$ for $0<u<K$. Obviously, $f(0)=g(0)=0$, and, using the formulas (121.00) of \cite{ByrdFriedman},
\[
f'(u)=\frac{k^2{k'}^2\sn^2(u)}{\dn^2(u)},\qquad
g'(u)=-\frac{{k'}^2}{\cn^2(u)},
\]
thus $f'(u)>0$ and $g'(u)<0$ for $0<u<K$, which gives $f(u)>0$ and $g(u)<0$, $0<u<K$.
\end{proof}


\begin{lemma}\label{Lemma_BoundCnDn}
Let $0<k<1$, then, for $0\leq{u}<K$,
\begin{equation}
\frac{1}{\dn(u,k)}\leq\cosh(u)\leq\frac{1}{\cn(u,k)},
\end{equation}
and
\begin{equation}
\log\frac{1+k\,\sn(u)}{\dn(u)}\leq{u}\leq\log\frac{1+\sn(u)}{\cn(u)}.
\end{equation}
\end{lemma}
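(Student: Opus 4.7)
The plan is to split the four inequalities into two pairs and handle each pair by a different mechanism, reusing Lemma \ref{Lemma_MonCnDn} for the first and reducing the second to the integration of a short derivative computation.

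For the first pair $1/\dn(u,k)\leq\cosh(u)\leq 1/\cn(u,k)$, the key observation is that at the limiting modulus $k=1$ one has $\cn(u,1)=\dn(u,1)=1/\cosh(u)$, so both inequalities degenerate to equality. Since $K(k)\to\infty$ as $k\to 1$, for any fixed $u$ with $0<u<K(k)$ the quantities $\cn(u,\kappa)$ and $\dn(u,\kappa)$ remain defined for all $\kappa\in[k,1]$. Lemma \ref{Lemma_MonCnDn} then forces $\cn(u,k)\leq\cn(u,1)=1/\cosh(u)$ and $\dn(u,k)\geq\dn(u,1)=1/\cosh(u)$, and inverting gives the first pair.

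For the second pair $\log\frac{1+k\sn(u)}{\dn(u)}\leq u\leq\log\frac{1+\sn(u)}{\cn(u)}$, the plan is to differentiate each logarithmic expression in $u$ and compare it with the derivative of $u$ itself. Using $\sn'=\cn\,\dn$, $\cn'=-\sn\,\dn$, $\dn'=-k^2\sn\,\cn$ together with the identities $\sn^2+\cn^2=1$ and $\dn^2=1-k^2\sn^2$, a routine quotient-rule calculation should collapse things to
\[
\frac{d}{du}\log\frac{1+\sn(u)}{\cn(u)}=\frac{\dn(u)}{\cn(u)},\qquad
\frac{d}{du}\log\frac{1+k\sn(u)}{\dn(u)}=\frac{k\cn(u)}{\dn(u)}.
\]
The first quotient is at least $1$ because $\dn^2-\cn^2=k'^2\sn^2\geq 0$, and the second is at most $1$ because $\dn^2-k^2\cn^2=k'^2\geq 0$. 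Since $u$ and both logarithmic expressions vanish at $u=0$, integrating these pointwise derivative bounds from $0$ to $u$ yields the two inequalities.

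I do not anticipate a real obstacle. The only step with any substance is the algebraic simplification of the two derivatives, where one must recognize that the numerator produced by the quotient rule (i.e.\ $\cn^2+\sn(1+\sn)$, respectively $\dn^2+k\sn(1+k\sn)$) collapses via the relevant Pythagorean-type identity into a factor matching the denominator inside the logarithm. Once that cancellation is spotted, the remaining comparisons $\dn\geq\cn$ and $\dn\geq k\cn$ are immediate from the defining identities, and integration from $0$ finishes the argument.
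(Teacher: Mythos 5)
Your proof is correct. For the first pair of inequalities you argue exactly as the paper does: Lemma~\ref{Lemma_MonCnDn} plus the limit $\cn(u,k),\dn(u,k)\to 1/\cosh(u)$ as $k\to1$ (your extra remark that $K(\kappa)\geq K(k)>u$ for $\kappa\in[k,1)$, so that the monotonicity lemma applies on the whole interval of moduli, is a point the paper leaves implicit). Where you diverge is the second pair: the paper obtains it in one line by applying $\cosh^{-1}(x)=\log\bigl(x+\sqrt{x^2-1}\bigr)$ to the first pair, since $\sqrt{1/\cn^2-1}=\sn/\cn$ and $\sqrt{1/\dn^2-1}=k\sn/\dn$ on $[0,K)$, so that $\log\frac{1+\sn(u)}{\cn(u)}=\cosh^{-1}(1/\cn(u))$ and likewise for $\dn$. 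You instead prove the second pair from scratch by computing
\[
\frac{\dd}{\dd u}\log\frac{1+\sn(u)}{\cn(u)}=\frac{\dn(u)}{\cn(u)}\geq1,\qquad
\frac{\dd}{\dd u}\log\frac{1+k\,\sn(u)}{\dn(u)}=\frac{k\,\cn(u)}{\dn(u)}\leq1,
\]
which I have checked: the quotient-rule numerators do collapse via $\sn^2+\cn^2=1$ and $\dn^2+k^2\sn^2=1$, and the comparisons follow from $\dn^2-\cn^2={k'}^2\sn^2\geq0$ and $\dn^2-k^2\cn^2={k'}^2\geq0$; integrating from $0$ (where all three quantities vanish) finishes it. The paper's route is shorter once the first pair is in hand; yours has the merit of making the logarithmic inequalities self-contained and independent of Lemma~\ref{Lemma_MonCnDn}, and of isolating which of the two pairs is actually used downstream (only the logarithmic one enters the proof of Theorem~\ref{Thm_main}). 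Either way the lemma is fully established.
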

\begin{proof}
By Lemma\,\ref{Lemma_MonCnDn} and the fact that
\[
\lim_{k\to1}\cn(u,k)=\lim_{k\to1}\dn(u,k)=\frac{1}{\cosh(u)}
\]
we get $\cn(u,k)\leq1/\cosh(u)$ and $\dn(u,k)\geq1/\cosh(u)$ for every $u\in(0,K)$. The second inequality follows immediately by the well known formula $\cosh^{-1}(u)=\log(u+\sqrt{u^2-1})$.
\end{proof}


\begin{lemma}\label{Lemma_BoundZn}
Let $0<k<1$, then, for $0\leq{u}\leq{K}$,
\begin{equation}
\zn(u)\leq(E-{k'}^2K)(1-u/K).
\end{equation}
\end{lemma}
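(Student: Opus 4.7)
The inequality has equality at $u=K$ (both sides vanish: $\zn(K)=0$ by definition, and the factor $1-u/K$ is zero), so the natural strategy is to introduce the difference
\[
F(u):=(E-{k'}^{2}K)\bigl(1-u/K\bigr)-\zn(u),
\]
and try to show that $F$ is monotone non-increasing on $[0,K]$, which together with $F(K)=0$ immediately forces $F(u)\geq 0$.

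Computing the derivative is the whole game. Using the integral representation $\zn(u)=\int_{0}^{u}\dn^{2}(v)\,\dd v-uE/K$ given at the start of the paper, one gets $\zn'(u)=\dn^{2}(u)-E/K$. Therefore
\[
F'(u)=-\frac{E-{k'}^{2}K}{K}-\dn^{2}(u)+\frac{E}{K}={k'}^{2}-\dn^{2}(u).
\]
Since $\dn^{2}(u)=1-k^{2}\sn^{2}(u)$ and ${k'}^{2}=1-k^{2}$, this simplifies to $F'(u)=-k^{2}\cn^{2}(u)\leq 0$, with strict inequality on $(0,K)$.

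To finish, I verify the endpoint value $F(K)=0$. We have $\zn(K)=\int_{0}^{K}\dn^{2}(v)\,\dd v-E=E-E=0$ (since $\int_{0}^{K}\dn^{2}(v)\,\dd v=E$ by the standard formula for the complete elliptic integral of the second kind), and the prefactor $1-K/K$ also vanishes, so indeed $F(K)=0$. Combined with the monotonicity $F'\leq 0$, this gives $F(u)\geq F(K)=0$ on $[0,K]$, which is exactly the claimed bound.

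There is no real obstacle here: the proof rests entirely on the identity $\zn'(u)=\dn^{2}(u)-E/K$ and the elementary manipulation $\dn^{2}(u)-{k'}^{2}=k^{2}\cn^{2}(u)$. The only thing worth flagging is that the bound is genuinely meaningful — i.e.\ $E-{k'}^{2}K>0$ for $0<k<1$ — which is immediate from $E-{k'}^{2}K=k^{2}\int_{0}^{\pi/2}\cos^{2}\theta/\sqrt{1-k^{2}\sin^{2}\theta}\,\dd\theta$, confirming that the right-hand side of the lemma is non-negative on $[0,K]$ as it must be.
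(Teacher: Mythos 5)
Your proof is correct and is essentially identical to the paper's: both define the difference $f(u)=(E-{k'}^2K)(1-u/K)-\zn(u)$, compute $f'(u)={k'}^2-\dn^2(u)\leq0$, and conclude from $f(K)=0$ that $f\geq0$ on $[0,K]$. Your extra remarks (the simplification $f'(u)=-k^2\cn^2(u)$ and the positivity of $E-{k'}^2K$) are correct but not needed beyond what the paper already records.
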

\begin{proof}
Define
\[
f(u):=(E-{k'}^2K)(1-u/K)-\zn(u),
\]
then $f(K)=0$ and $f'(u)={k'}^2-\dn^2(u)<0$ for $0\leq{u}\leq{K}$, thus $f(u)\geq0$ for $0\leq{u}\leq{K}$.
\end{proof}


\begin{lemma}\label{Lemma_BoundTheta}
Let $0<k<1$, then, for $0\leq\lambda\leq1$,
\begin{equation} \label{ThetaUpperBound}
\frac{\Theta(0)}{\Theta(\laK)}\leq\sqrt{k'}\,\exp\bigl(\tfrac{1}{2}\bigl(E/K-{k'}^2\bigr)(1-\lambda)^2K^2\bigr).
\end{equation}
\end{lemma}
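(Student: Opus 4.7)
The plan is to reduce \eqref{ThetaUpperBound} to Lemma \ref{Lemma_BoundZn} by a monotonicity argument on the logarithm. Taking logs, the inequality is equivalent to $f(\lambda) \leq 0$ for $\lambda \in [0,1]$, where
\[
f(\lambda) := \log\frac{\Theta(0)}{\Theta(\laK)} - \log\sqrt{k'} - \tfrac{1}{2}\bigl(E/K - {k'}^{2}\bigr)(1-\lambda)^{2} K^{2}.
\]
I would prove this by verifying the boundary value $f(1) = 0$ and then showing $f'(\lambda) \geq 0$, so that $f$ is non-decreasing on $[0,1]$ and hence $f(\lambda) \leq f(1) = 0$.

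The derivative step is the easy one. Using $\zn(u) = \Theta'(u)/\Theta(u)$, chain rule gives
\[
f'(\lambda) = -K\,\zn(\laK) + \bigl(E - {k'}^{2} K\bigr)(1-\lambda)\,K = K\bigl[(E - {k'}^{2} K)(1-\lambda) - \zn(\laK)\bigr],
\]
and this is non-negative directly by Lemma \ref{Lemma_BoundZn} (applied at $u = \laK$). Exponentiating $f(\lambda) \leq 0$ then yields \eqref{ThetaUpperBound}, with equality at $\lambda = 1$ matching the claim that the bound becomes sharp there.

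The only real obstacle is verifying the boundary value $f(1) = 0$, which amounts to the identity $\Theta(0)/\Theta(K) = \sqrt{k'}$. The plan here is to specialise the fourth duplication formula in \eqref{Theta2u}, namely $\Theta(2u)\,\Theta^{3}(0) = \Theta^{4}(u) - H^{4}(u)$, to $u = \KK$. Inserting the values from Lemma \ref{Lemma_ThetaK2},
\[
\Theta(K)\,\Theta^{3}(0) = \tfrac{2}{\pi^{2}} K^{2}\sqrt{k'}\bigl[(1+k') - (1-k')\bigr] = \tfrac{4\,{k'}^{3/2} K^{2}}{\pi^{2}}.
\]
Combining this with the standard evaluation $\Theta^{4}(0) = 4{k'}^{2} K^{2}/\pi^{2}$ (which is implicit in the equivalence of the two forms of \eqref{cap1}) gives $\Theta^{4}(K) = 4K^{2}/\pi^{2}$, and dividing yields $\Theta^{4}(0)/\Theta^{4}(K) = {k'}^{2}$, as required. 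With this, $f(1) = \log\sqrt{k'} - \log\sqrt{k'} - 0 = 0$, completing the argument.
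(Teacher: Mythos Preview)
Your proof is correct and is essentially the same as the paper's: both rest on Lemma~\ref{Lemma_BoundZn} together with the identity $\Theta(0)=\sqrt{k'}\,\Theta(K)$. The only cosmetic difference is that the paper integrates the inequality $\zn(v)\le(E-{k'}^2K)(1-v/K)$ over $[\lambda K,K]$ to bound $\log\bigl(\Theta(K)/\Theta(\lambda K)\bigr)$ directly, whereas you differentiate and use $f'(\lambda)\ge0$ with $f(1)=0$; the fundamental theorem of calculus makes these two presentations equivalent.
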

\begin{proof}
For $0\leq{u}\leq{K}$, we have
\[
\log\frac{\Theta(K)}{\Theta(u)}=-\log\frac{\Theta(u)}{\Theta(K)}=-\int_{K}^{u}\zn(v)\,\dd{v}=\int_{u}^{K}\zn(v)\,\dd{v}.
\]
Thus, by Lemma\,\ref{Lemma_BoundZn},
\[
\log\frac{\Theta(K)}{\Theta(u)}\leq\int_{u}^{K}(E-{k'}^2K)(1-v/K)\,\dd{v}
=(E-{k'}^2K)(K/2-u+u^2/(2K))
\]
Taking $\exp$ on both sides and using the relation $\Theta(0)=\sqrt{k'}\,\Theta(K)$ gives \eqref{ThetaUpperBound}.
\end{proof}


\bibliographystyle{amsplain}

\bibliography{UpperBoundCapacity}

\end{document}